\newcommand{\RN}[1]{%
  \textup{\uppercase\expandafter{\romannumeral#1}}%
}
\date{}
\begin{document}

\vspace*{1.5cm}

\centerline{}

\centerline {\Large{\bf Maximal non valuative domains}} 
\centerline{}

\centerline{\bf {Rahul Kumar\footnote{The author was supported by the SRF grant from UGC India, Sr.
No. 2061440976.} \& Atul
Gaur\footnote{The author was supported by the MATRICS grant from DST-SERB India, No. MTR/2018/000707.}}}

\centerline{Department of Mathematics}

\centerline{University of Delhi, Delhi, India.}

\centerline{E-Mail: rahulkmr977@gmail.com; gaursatul@gmail.com}

\centerline{}

\newtheorem{Theorem}{\quad Theorem}[section]

\newtheorem{Corollary}[Theorem]{\quad Corollary}

\newtheorem{Lemma}[Theorem]{\quad Lemma}

\newtheorem{Proposition}[Theorem]{\quad Proposition}

\theoremstyle{definition}

\newtheorem{Definition}[Theorem]{\quad Definition}

\newtheorem{Example}[Theorem]{\quad Example}

\newtheorem{Remark}[Theorem]{\quad Remark}

\begin{abstract}
The notion of maximal non valuative domain is introduced and characterized. An integral domain $R$ is called a maximal non valuative domain if $R$ is not a valuative domain but every proper overring of $R$ is a valuative domain. Maximal non valuative domains have at most four maximal ideals. Various properties of maximal non valuative domains are discussed. Conditions are given under which pseudo-valuation domains and maximal non pseudo-valuation domains are maximal non valuative domains.
\end{abstract}

\noindent
{\bf Mathematics Subject Classification:} Primary 13G05, 13B02, Secondary 13B22, 13B30, 13A15.\\
{\bf Keywords:} Maximal non valuative domain, valuative domain, valuation domain, pseudo-valuation domain, B$\acute{\text{e}}$zout domain.

\section{Introduction}
Our work is motivated by \cite{cahen}. An integral domain $R$ with the quotient field $\text{qf}(R)$ is said to be a valuative domain, see \cite{cahen}, if for each $x\in \text{qf}(R)$, either $R\subseteq R[x]$ or $R\subseteq R[x^{-1}]$ has no intermediate ring. In this paper, we introduce the concept of maximal non valuative domains. Let $R\subset T$ be an extension of integral domains. Then we say that $R$ is a maximal non valuative subring of $T$ if $R$ is not a valuative domain but each subring of $T$ containing $R$ properly is a valuative domain. Moreover, if $T = \text{qf}(R)$, then $R$ is said to be a maximal non valuative domain. In this paper, we discuss various properties of maximal non valuative domain and characterize the same in terms of B$\acute{\text{e}}$zout domain.
All rings considered below are integral domains. By an overring of $R$, we mean a subring of the quotient field of $R$ containing $R$. A ring with a unique maximal ideal is called a local ring. The symbol $\subseteq$ is used for inclusion and $\subset$ is used for proper inclusion. Throughout this paper, $\text{qf}(R)$ denotes the quotient field of an integral domain $R$, $R'$ denotes the integral closure of $R$ in $\text{qf}(R)$. For a ring $R$, $\text{dim}(R)$ denotes the Krull dimension of $R$.

In this paper, we show that if $R$ is a maximal non valuative subring of $T$, then $T$ is an overring of $R$, see Theorem \ref{r1}. If $R$ is a maximal non valuative domain, then $R'$ is a Pr\"ufer domain, see Corollary \ref{sa2}. Moreover, if $R$ is not integrally closed, then $R$ has at most three maximal ideals, the set of non-maximal prime ideals of $R$ is linearly ordered by inclusion, and there is at most one maximal ideals of $R$ that does not contain all non-maximal prime ideals of $R$, see Theorem \ref{tar}; and if $R$ is integrally closed, then $R$ has at least two and at most four maximal ideals, there are exactly two non-maximal prime ideals that are not comparable in case $R$ has exactly two maximal ideals, otherwise the set of non-maximal prime ideals of $R$ is linearly ordered by inclusion, and there are at most two maximal ideals of $R$ that do not contain all non-maximal prime ideals of $R$, see Proposition \ref{sa1}. We characterize integrally closed maximal non valuative domains in terms of B$\acute{\text{e}}$zout domains, see Theorem \ref{r7}. Finally, we characterize local non integrally closed maximal non valuative domain $R$. We also discuss the cases where either $R$ is a pseudo-valuation domain or a maximal non pseudo-valuation subring of $R'$. 

For any ring $R$, $\text{Spec}(R)$ denotes the set of all prime ideals of $R$; $\text{Max}(R)$ denotes the set of all maximal ideals of $R$. As usual, $|X|$ denotes the cardinality of a set $X$.

\section{Results}

A ring extension $R\subseteq T$ is said to be residually algebraic if for any prime ideal $Q$ of $T$, $T/Q$ is algebraic over $R/(Q\cap R)$, see \cite{fontana,ayache1}. Moreover, if $R\subseteq S$ is residually algebraic, for any subring $S$ of $T$ containing $R$, then $(R,T)$ is said to be a residually algebraic pair, see \cite{ayache1}.

In our first theorem, we list some properties of an extension of integral domains in which every intermediate ring is a valuative domain. 

\begin{Theorem}\label{r1}
Let $R\subset T$ be a ring extension of integral domains. If each subring of $T$ properly containing $R$ is a valuative domain, then the following hold:
\begin{enumerate}
\item[(i)] $R\subset T$ is an algebraic extension.
\item[(ii)] $(R,T)$ is a residually algebraic pair.
\item[(iii)] If $R$ is not a field, then $T$ is an overring of $R$.
\end{enumerate}
\end{Theorem}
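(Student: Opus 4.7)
The plan is to establish (i), (ii), (iii) in sequence, with (i) providing algebraicity, (ii) refining this to residual algebraicity, and (iii) exploiting that $R$ is not a field to force $T$ into $\text{qf}(R)$. I will use throughout that any ring $S$ with $R \subsetneq S \subseteq T$ is valuative, together with structural properties of valuative domains from \cite{cahen}.

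For (i), I would argue by contradiction. Suppose $t \in T$ is transcendental over $R$. Then $R \subsetneq R[t] \subseteq T$, so $R[t]$ must be a valuative domain. But a valuative domain has only finitely many maximal ideals (by the relevant structural result in \cite{cahen}), whereas $R[t]$ always has infinitely many: if $R$ is a field, $R[t]$ is a PID with infinitely many irreducible polynomials, and otherwise the fiber of $R[t]$ over any nonzero prime $\mathfrak{p}$ of $R$ is a polynomial ring over a field, which already supplies infinitely many maximal ideals of $R[t]$ lying over $\mathfrak{p}$. This contradiction proves that every element of $T$ is algebraic over $R$.

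For (ii), let $R \subseteq S \subseteq T$ be any intermediate ring. By (i), every $s \in S$ is algebraic over $R$. Given a prime $Q$ of $S$ and $s \in S$ with $s \notin Q$ (the case $s \in Q$ is trivial), choose a polynomial $a_n X^n + \cdots + a_0 \in R[X]$ with $a_n \neq 0$ vanishing on $s$. Using that $R$ is a domain, any common factor of the coefficients that lies in $R$ can be cancelled, so we may arrange that not all $a_i$ lie in $Q \cap R$. Letting $k$ be the largest index with $a_k \notin Q \cap R$, reducing modulo $Q$ gives $\overline{a_k}\,\bar{s}^{\,k} + \cdots + \overline{a_0} = 0$ in $S/Q$ with nonzero leading coefficient, so $\bar{s}$ is algebraic over $R/(Q \cap R)$. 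Hence $(R,T)$ is a residually algebraic pair.

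For (iii), suppose for contradiction that $R$ is not a field but some $x \in T$ lies outside $K := \text{qf}(R)$. Then $R \subsetneq R[x]$ is a valuative domain and, by (i), its quotient field $K(x)$ is a proper finite algebraic extension of $K$. Fix a nonzero nonunit $a \in R$ (which exists because $R$ is not a field) and a maximal ideal $\mathfrak{m}$ of $R$ containing $a$. I would then extend the valuation of $K$ corresponding to a valuation overring of $R$ centered at $\mathfrak{m}$ to at least two inequivalent valuations on $K(x)$, producing two incomparable overrings of $R[x]$ in $K(x)$. Together with (ii) rigidifying residue extensions, this would violate the valuative property of $R[x]$, giving the desired contradiction.

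The main obstacle is (iii): converting the abstract valuative hypothesis into the concrete conclusion $T \subseteq \text{qf}(R)$. Items (i) and (ii) are largely a matter of invoking cited structural facts for valuative domains and a standard primitivity reduction, whereas (iii) requires careful construction of valuation extensions in a proper finite extension of $K$ and sensitive use of the nonzero nonunit $a$ to ensure the obstructing overrings are genuinely distinct and incomparable.
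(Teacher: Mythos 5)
Your part (i) is essentially sound, and it takes a cleaner route than the paper (which, for $S=R[t^{3},t^{5}]$, exhibits explicit rings strictly between $S$ and $S[t]$ and between $S$ and $S[t^{-1}]$): you instead invoke the fact that a valuative domain has at most three maximal ideals while $R[t]$ always has infinitely many. One small repair: work over a \emph{maximal} ideal $\mathfrak{m}$ of $R$ rather than an arbitrary nonzero prime $\mathfrak{p}$, because the maximal ideals of the fibre $\kappa(\mathfrak{p})[t]$ correspond to primes of $R[t]$ that need not be maximal in $R[t]$ when $\mathfrak{p}$ is not maximal (e.g.\ $(2,\,yt-1)$ in $\mathbb{Z}[y][t]$ has quotient $\mathbb{F}_2[y,y^{-1}]$, not a field); over $\mathfrak{m}$ they are genuinely maximal, and this already suffices.

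Parts (ii) and (iii) have genuine gaps. In (ii) you try to deduce residual algebraicity \emph{formally} from (i) by ``cancelling any common factor of the coefficients so that not all $a_i$ lie in $Q\cap R$.'' This fails in a general domain: the coefficients can be coprime and still all lie in $Q\cap R$. With $R=k[x,y]$ and $s=-x/y$ satisfying $ys+x=0$, the coefficients $y$ and $x$ have no common factor yet both lie in $(x,y)$, and the relation reduces to $0=0$ modulo any prime over $(x,y)$; indeed $R\subset R[x/y]$ is algebraic but not residually algebraic, so residual algebraicity is strictly stronger than (i) and cannot be a corollary of it. The paper must (and does) reuse the valuative hypothesis: if $S/Q$ were transcendental over $R/(Q\cap R)$, some intermediate ring $S''$ would be valuative, hence have at most three maximal ideals, while surjecting onto a domain of the form $D[\bar t]$ with $\bar t$ transcendental, which has infinitely many. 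In (iii) your outline rests on a false step: a valuation on $K$ need not admit two inequivalent extensions to a proper finite extension $K(x)$ (the extension can be unique), and even two incomparable valuation overrings would not by themselves contradict valuativity, since valuative domains may have several maximal ideals; the concluding appeal to (ii) ``rigidifying residue extensions'' is not an argument. The paper's proof of (iii) is a genuinely delicate computation --- it takes $\alpha=tr$ integral over $R$, a nonzero nonunit $\beta$, forms $S=R+\alpha\beta^{3}R+\cdots+\alpha^{n-1}\beta^{3}R$, verifies $\alpha\beta^{-1},\alpha^{-1}\beta\notin S$, and extracts a contradiction from each of the four ways the chains $S\subseteq S[\alpha\beta^{-1}]$ and $S\subseteq S[\alpha^{-1}\beta]$ could avoid having an intermediate ring --- and nothing in your sketch substitutes for it.
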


\begin{proof}
\begin{enumerate}
\item[(i)] If possible, suppose that $R\subset T$ is not an algebraic extension. Then there exists an element $t\in T\setminus R$ which is transcendental over $R$. Take $S = R[t^3, t^5]$. Then $S$ is a subring of $T$ containing $R$ properly. Therefore, $S$ is a valuative domain. Clearly, $t\in \text{qf}(S)$. Thus, either $S\subseteq S[t]$ or $S\subseteq S[t^{-1}]$ has no intermediate ring. Now, note that $S\subset R[t+t^{2}, t^3, t^5]\subset S[t] = R[t]$ and $S\subset R[t^{-1}+t^{2}, t^3, t^5]\subset S[t^{-1}] = R[t^{-1}, t^3, t^5]$, which is a contradiction. Hence, $R\subset T$ is an algebraic extension.

\item[(ii)] Let $S$ be a subring of $T$ properly containing $R$ and $Q$ be a prime ideal of $S$. If possible, suppose that $R/(Q\cap R) \subset S/Q$ is not an algebraic extension. Then there exists an element $\bar{t}\in S/Q$ that is not algebraic over $R/(Q\cap R)$. Take $S' = (R/Q\cap R)[\bar{t}]$. Then $S' = S''/(Q\cap S'')$ for some subring $S''$ of $S$ containing $R$ properly. Therefore, $S''$ is a valuative domain. Thus, by \cite[Theorem~2.2(i)]{cahen}, $S''$ has at most three maximal ideals and hence $S'$ has at most three maximal ideals, which is a contradiction.

\item[(iii)] %If $R$ is integrally closed in $T$, then result follows from part (ii) and $\cite[Theorem~2.10]{ayache1}$. We may now assume that $R$ is not integrally closed in $T$. Then $\overline{R_T}$ is a valuative domain.

%$\textbf{Case 1:}$ $R$ is local. Then we claim that each subring of $T$ containing $R$ is local and overring of $R$. Let $S$ be a subring of $T$ contains $R$. Then $\overline{R_S}$ is local. Now, $\overline{R_S}\subset S$ is an algebraic extension, by part (i). Thus, by $\cite[Theorem~1]{ben}$, $(\overline{R_S}, S)$ is a normal pair. Hence, by $\cite[Theorem~6.8]{dobbs2}$, $S$ is local and $S$ is an overring of $R$. 

%$\textbf{Case 2:}$ $R$ is not local. We claim that $R$ has finitely many maximal ideals. If possible, assume that $R$ has infinitely many maximal ideals. Then $\overline{R_T}$ has infinitely many maximal ideals, which is a contradiction, by $\cite[Theorem~2.2(i)]{cahen}$.   

Let $K = \text{qf}(R)$. If possible, suppose that $T\not\subseteq K$. Choose $t\in T\setminus K$. Then $t$ is algebraic over $R$, by part (i). Therefore, $\alpha = tr$ is integral over $R$ for some non zero $r\in R$. Clearly, $\alpha\notin K$. Let $n = [K(\alpha) : K]$. Then $\{1, \alpha, \alpha^2, \ldots, \alpha^{n-1}\}$ is a basis of $K(\alpha)$ over $K$. Let $\beta$ be any non zero, non unit of $R$. Take $S = R + \alpha\beta^{3} R + \alpha^{2}\beta^{3} R + \cdots + \alpha^{n-1}\beta^{3} R$. Then $S$ is a subring of $T$ properly containing $R$. Therefore, $S$ is a valuative domain. Note that $\text{qf}(S) = K(\alpha)$. Now, if $\alpha \beta^{-1}\in S$, then $\alpha \beta^{-1} = r_0 + \alpha\beta^{3} r_1 + \alpha^{2}\beta^{3} r_2 + \cdots + \alpha^{n-1}\beta^{3} r_{n-1}$ for some $r_0, r_1, \ldots, r_{n-1}\in R$. It follows that $\beta^{-1} = \beta^{3} r_1\in R$, a contradiction. Also, if $\alpha^{-1} \beta\in S$, then $\alpha^{-1} \beta = r_0 + \alpha\beta^{3} r_1 + \alpha^{2}\beta^{3} r_2 + \cdots + \alpha^{n-1}\beta^{3} r_{n-1}$ for some $r_0, r_1, \ldots, r_{n-1}\in R$. It follows that $\beta = r_0 \alpha + \alpha^{2}\beta^{3} r_1 + \alpha^{3}\beta^{3} r_2 + \cdots + \alpha^{n}\beta^{3} r_{n-1}$. Let $\alpha^{n} = \sum_{i = 0}^{n-1} \alpha^{i}x_{i}$ for some $x_0, x_1, \ldots, x_{n-1}\in R$. Consequently, we have $$\beta = r_0 \alpha + \alpha^{2}\beta^{3} r_1 + \alpha^{3}\beta^{3} r_2 + \cdots + \alpha^{n-1}\beta^{3} r_{n-2} + \beta^{3} r_{n-1} \Big(\sum_{i = 0}^{n-1} \alpha^{i}x_{i}\Big)$$
It follows that $\beta = \beta^{3} r_{n-1} x_0$, that is, $1 = \beta^{2} r_{n-1} x_0$, a contradiction. Thus, $\alpha\beta^{-1}, ~\alpha^{-1}\beta \in K(\alpha)\setminus S$. Since $S$ is a valuative domain, either $S\subset S[\alpha \beta^{-1}]$ or $S\subset S[\alpha^{-1} \beta]$ has no intermediate ring. Assume that $S\subset S[\alpha \beta^{-1}]$ has no intermediate ring. It follows that either $S = S[\alpha]$ or $S[\alpha] = S[\alpha \beta^{-1}]$. Now, if $S = S[\alpha]$, then $\alpha \in S$. Therefore, $\alpha = r_0 + \alpha\beta^{3} r_1 + \alpha^{2}\beta^{3} r_2 + \cdots + \alpha^{n-1}\beta^{3} r_{n-1}$ for some $r_0, r_1, \ldots, r_{n-1}\in R$. Consequently, we have $1 = \beta^{3} r_1$, which is a contradiction. Thus, we have $S[\alpha] = S[\alpha \beta^{-1}]$, that is, $\alpha \beta^{-1}\in S[\alpha]$. This gives $$\alpha \beta^{-1} = \Big(\sum_{i = 0}^{n-1} \alpha^{i}\beta^{3}r_{0i}\Big) + \Big(\sum_{i = 0}^{n-1} \alpha^{i}\beta^{3}r_{1i}\Big)\alpha + \cdots + \Big(\sum_{i = 0}^{n-1} \alpha^{i}\beta^{3}r_{(n-1)i}\Big)\alpha^{n-1}$$
Now, using $\alpha^{n} = \sum_{i = 0}^{n-1} \alpha^{i}x_{i}$, we get $\beta^{-1} = \beta^{3}r\in R$ for some non zero $r\in R$, a contradiction. Thus, we may assume that $S\subset S[\alpha^{-1} \beta]$ has no intermediate ring. It follows that either $S = S[\alpha^{-1} \beta^{2}]$ or $S[\alpha^{-1} \beta^{2}] = S[\alpha^{-1} \beta]$. If former holds, then $\alpha^{-1} \beta^{2}\in S$. Therefore, $\alpha^{-1} \beta^{2} = r_0 + \alpha\beta^{3} r_1 + \alpha^{2}\beta^{3} r_2 + \cdots + \alpha^{n-1}\beta^{3} r_{n-1}$ for some $r_0, r_1, \ldots, r_{n-1}\in R$. This gives $$\beta^{2} = (r_0 + \alpha\beta^{3} r_1 + \alpha^{2}\beta^{3} r_2 + \cdots + \alpha^{n-1}\beta^{3} r_{n-1})\alpha$$ Consequently, we have $\beta^{2} = \beta^{3}r$ for some non zero $r\in R$, that is, $1 = \beta r$, a contradiction. Finally, we assume that $S[\alpha^{-1} \beta^{2}] = S[\alpha^{-1} \beta]$, that is, $\alpha^{-1} \beta \in S[\alpha^{-1} \beta^{2}]$. This gives $$\alpha^{-1} \beta = \Big(\sum_{i = 0}^{n-1} \alpha^{i}\beta^{3}r_{0i}\Big) + \Big(\sum_{i = 0}^{n-1} \alpha^{i}\beta^{3}r_{1i}\Big)\alpha^{-1} \beta^{2} + \cdots + \Big(\sum_{i = 0}^{n-1} \alpha^{i}\beta^{3}r_{mi}\Big)\alpha^{-m} \beta^{2m},$$ that is, $$\alpha^{m-1} \beta = \Big(\sum_{i = 0}^{n-1} \alpha^{i}\beta^{3}r_{0i}\Big)\alpha^{m} + \Big(\sum_{i = 0}^{n-1} \alpha^{i}\beta^{3}r_{1i}\Big)\alpha^{m-1} \beta^{2} + \cdots + \Big(\sum_{i = 0}^{n-1} \alpha^{i}\beta^{3}r_{mi}\Big) \beta^{2m}$$ 
On comparing the coefficient of $\alpha^{m-1}$, we conclude that $\beta$ is a multiple of $\beta^{3}$, which is again a contradiction. Therefore, $T \subseteq K$. \qedhere

%$\alpha\in qf(S)$ and $\alpha$ is integral over $S$. If $\alpha\in S$, then $\alpha = r_0 + \alpha\beta r_1 + \alpha^{2}\beta r_2 + \cdots + \alpha^{n-1}\beta r_{n-1}$ for some $r_0, r_1, \ldots, r_{n-1}\in R$. Therefore, we have $1 = \beta r_1$, which is a contradiction. Thus, $S$ is not an integrally closed domain.  

%Note that if $m$ and $n$ be any two maximal ideals of $R$, then $M = m + \alpha\beta R + \alpha^{2}\beta R + \cdots + \alpha^{n-1}\beta R$ and $N = n + \alpha\beta R + \alpha^{2}\beta R + \cdots + \alpha^{n-1}\beta R$ are two maximal ideals of $S$. Therefore, by $\cite[Theorem~6.2]{cahen}$, $S$ has exactly two maximal ideals $M$ and $N$such that $S_N$ is a valuation domain and $S_M$ is a valuative pseudo-valuation domain. Now, $\alpha\in S_N$.  Let $$\alpha = \frac{a_0 + \alpha\beta a_1 + \alpha^{2}\beta a_2 + \cdots + \alpha^{n-1}\beta a_{n-1}}{b_0 + \alpha\beta b_1 + \alpha^{2}\beta b_2 + \cdots + \alpha^{n-1}\beta b_{n-1}}$$ for some $a_{i}, b_{i}\in R$ and $b_0\notin n$. Then we have $b_0 + \beta b_{n-1} z_1 = \beta a_1$ where $\alpha^{n} = \sum_{j = 0}^{n-1}z_{j}\alpha^{j}$. Therefore, $b_0\in n$, which is a contradiction. Thus, $T\subseteq K$.
\end{enumerate}
\end{proof}

We now define the maximal non valuative subrings of an integral domain formally. 

\begin{Definition}
Let $R$ be a proper subring of an integral domain $T$. Then $R$ is said to be a maximal non valuative subring of $T$ if $R$ is not a valuative domain but every subring of $T$ properly containing $R$ is a valuative domain. A domain $R$ is said to be a maximal non valuative domain if $R$ is a maximal non valuative subring of its quotient field $\text{qf}(R)$. 
\end{Definition}

The next corollary shows that the integral closure of maximal non valuative domain is a Pr\"ufer domain.

\begin{Corollary}\label{sa2}
Let $R$ be a maximal non valuative domain. Then $R'$ is a Pr\"ufer domain. Moreover, if $R'$ is local, then $R'$ is a valuation domain. 
\end{Corollary}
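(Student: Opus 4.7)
The natural strategy is to split on whether $R$ is integrally closed, and in each case exhibit enough proper overrings so that the maximal non valuative hypothesis, combined with known structure results for valuative domains from \cite{cahen}, forces Pr\"uferness. The key fact to import from \cite{cahen} is that an integrally closed valuative domain is a Pr\"ufer domain; since any local Pr\"ufer domain is a valuation domain, this also yields that a local integrally closed valuative domain is a valuation domain.

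First, suppose $R$ is not integrally closed, so that $R'$ is a proper overring of $R$. Then $R'$ is a valuative domain by hypothesis, and since $R'$ is integrally closed by construction, the cited fact gives that $R'$ is Pr\"ufer immediately.

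Next, suppose $R = R'$ is integrally closed. The plan is to verify that $R_M$ is a valuation domain for each maximal ideal $M$ of $R$. Provided $R$ has at least two maximal ideals, $R_M \neq R$ for every such $M$, so $R_M$ is a proper overring of $R$, hence valuative; being local and integrally closed, $R_M$ is a valuation domain by the cited fact. Consequently $R = R'$ is locally a valuation domain, hence Pr\"ufer.

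The main obstacle is the remaining subcase in which $R = R'$ is \emph{local}. This must be ruled out to close the argument, and it is precisely the content of Proposition \ref{sa1}, which asserts that an integrally closed maximal non valuative domain has at least two maximal ideals. Granting this forward reference, the ``moreover'' clause also follows at once: if $R'$ is local, then the integrally closed subcase is precluded, so $R \neq R'$ and $R'$ is a local integrally closed valuative domain, and hence a valuation domain.
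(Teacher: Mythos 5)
Your two easy cases are handled correctly: when $R'$ is a proper overring of $R$, and when $R=R'$ has at least two maximal ideals, you exhibit proper overrings that the hypothesis forces to be valuative, and the facts you import from \cite{cahen} (a local integrally closed valuative domain is a valuation domain, hence an integrally closed domain that is locally valuative is Pr\"ufer) do close those cases. The problem is exactly the subcase you flag as ``the main obstacle'': $R=R'$ integrally closed and local. You dispose of it by a forward reference to Proposition \ref{sa1}(i), but in this paper the lower bound $2\leq|\text{Max}(R)|$ in Proposition \ref{sa1}(i) is proved \emph{by invoking Corollary \ref{sa2}} (``if $R$ is local, then $R$ is a valuation domain, by Corollary \ref{sa2}, a contradiction''). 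So your argument is circular: the statement you propose to grant is, in the paper's logical order, a consequence of the statement you are proving. And this is not a cosmetic issue --- in the local integrally closed case there is no proper overring obtained by localization or integral closure to get traction on, so the case genuinely requires a different idea.

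The paper avoids the case split entirely. By Theorem \ref{r1}(ii), $(R,\text{qf}(R))$ is a residually algebraic pair --- a fact whose proof uses only the proper intermediate rings, hence involves no circularity --- and then \cite[Corollary~2.8]{ayache1} yields directly that $R'$ is a Pr\"ufer domain; the ``moreover'' clause is immediate since a local Pr\"ufer domain is a valuation domain. To salvage your approach you would need either an independent argument that an integrally closed local domain all of whose proper overrings are valuative is itself a valuation domain, or a proof of the bound $|\text{Max}(R)|\geq 2$ for integrally closed maximal non valuative domains that does not pass through Corollary \ref{sa2}.
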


\begin{proof}
Note that $(R, \text{qf}(R))$ is a residually algebraic pair, by Theorem $\ref{r1}$. Now, the result follows from \cite[Corollary~2.8]{ayache1}.\qedhere
\end{proof}

%\begin{Corollary}\label{a1}
%Let $R$ be a maximal non valuative domain. If $R$ is local, then $R'$ is a valuation domain.
%\end{Corollary}

An integral domain $R$ is called an i-domain if for each overring $T$ of $R$, the canonical contraction map $\text{Spec}(T) \rightarrow \text{Spec}(R)$ is injective, see \cite{papick}. The next corollary is a direct consequence of Corollary \ref{sa2} and \cite[Corollary~2.15]{papick}.

\begin{Corollary}\label{a2}
Let $R$ be a maximal non valuative domain. If $R'$ is local, then $R$ is an i-domain.
\end{Corollary}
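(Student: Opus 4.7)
The plan is essentially to chain the two cited results, since Corollary \ref{a2} is stated to be a direct consequence of Corollary \ref{sa2} together with \cite[Corollary~2.15]{papick}. First I would invoke the ``moreover'' clause of Corollary \ref{sa2}: since $R$ is assumed to be a maximal non valuative domain and the hypothesis gives that $R'$ is local, that corollary upgrades the conclusion ``$R'$ is Pr\"ufer'' to ``$R'$ is a valuation domain.''

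Next I would feed this directly into Papick's criterion \cite[Corollary~2.15]{papick}, which characterizes i-domains in terms of the integral closure. The relevant statement is that an integral domain whose integral closure is a valuation domain is an i-domain (equivalently, the contraction $\mathrm{Spec}(T)\to\mathrm{Spec}(R)$ is injective for every overring $T$). Since $R\subseteq R'$ is integral, the map $\mathrm{Spec}(R')\to\mathrm{Spec}(R)$ is already surjective, and $R'$ being a valuation domain forces $\mathrm{Spec}(R')$ to be linearly ordered; combined with lying-over and incomparability for integral extensions, this is exactly what Papick's result packages for us.

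There is essentially no computational content to add beyond citing the two results; the only thing to verify is that the hypotheses of \cite[Corollary~2.15]{papick} really are met with no auxiliary assumption that we are missing. The main potential obstacle, therefore, is confirming that Papick's hypothesis is ``$R'$ is a valuation domain'' (and not, say, ``$R$ is local with $R'$ a valuation domain''); if the latter stronger form is needed, one would have to additionally argue that $R$ itself is local. That, however, follows from standard considerations: $R\subseteq R'$ is integral, so by lying-over the maximal ideals of $R$ correspond to maximal ideals of $R'$, and $R'$ being local (by assumption) then forces $R$ to be local as well. With this brief verification in place, the corollary follows at once.
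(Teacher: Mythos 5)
Your proposal matches the paper exactly: the paper gives no separate argument for this corollary, simply noting it is a direct consequence of Corollary \ref{sa2} (whose ``moreover'' clause yields that $R'$ is a valuation domain) together with \cite[Corollary~2.15]{papick}. Your additional remark that $R$ is local whenever $R'$ is (via lying-over for the integral extension $R\subseteq R'$) is a correct and harmless safeguard, but otherwise the route is identical.
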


Now, in the next proposition we discuss the impact of localization on maximal non valuative subrings of a domain. 

\begin{Proposition}\label{sa3}
Let $R$ be a maximal non valuative subring of an integral domain $T$ and $N$ be a multiplicatively closed subset of $R$. Then either $N^{-1}R$ is a valuative domain or $N^{-1}R$ is a maximal non valuative subring of $N^{-1}T$.
\end{Proposition}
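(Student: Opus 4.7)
Suppose $N^{-1}R$ is not a valuative domain; the plan is to show it is a maximal non valuative subring of $N^{-1}T$. The argument rests on the auxiliary fact that a localization of a valuative domain is again valuative, which I would either cite from \cite{cahen} or prove by a standard lift-and-contract argument: if $V$ is valuative and $V\subseteq V[x]$ is a minimal extension, then for any ring $B$ with $N^{-1}V\subseteq B\subseteq N^{-1}V[x]$ the contraction $B\cap V[x]$ sits between $V$ and $V[x]$ and hence equals one of them, forcing $B$ to be $N^{-1}V$ or $N^{-1}V[x]$ (once one notes that the elements of $N$ are units in any ring containing $N^{-1}V$).

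Next I would establish the identity $N^{-1}R\cap T=R$. Setting $R_1=N^{-1}R\cap T$, one has $R\subseteq R_1\subseteq T$; if the first inclusion were proper, the maximal non valuative hypothesis would make $R_1$ valuative, and since $N^{-1}R_1=N^{-1}R$, the auxiliary fact would make $N^{-1}R$ valuative, contradicting our standing assumption. The proper inclusion $N^{-1}R\subsetneq N^{-1}T$ is then immediate: if equality held, each $t\in T$ would lie in $N^{-1}R\cap T=R$, against $R\subsetneq T$.

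For the remaining condition, take any $S$ with $N^{-1}R\subsetneq S\subseteq N^{-1}T$ and set $A=S\cap T$. Writing each $s\in S$ as $t/n$ with $t\in T$, $n\in N$ gives $t=ns\in S\cap T=A$, whence $S=N^{-1}A$; picking $s\in S\setminus N^{-1}R$ produces $t\in A\setminus R$, so $R\subsetneq A\subseteq T$, and by hypothesis $A$ is valuative. The auxiliary fact then shows $S=N^{-1}A$ is valuative, completing the proof. The main obstacle is precisely the preservation of valuativity under localization; once that is in place, the rest is the familiar contraction correspondence between subrings of $N^{-1}T$ containing $N^{-1}R$ and subrings of $T$ containing $R$ in which the elements of $N$ are already invertible.
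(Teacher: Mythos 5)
Your proof is correct and follows essentially the same route as the paper's: both rest on the fact that localization preserves valuativity (the paper cites this as Proposition~2.4 of the Cahen--Dobbs--Lucas reference) together with the observation that every ring $S$ with $N^{-1}R\subset S\subseteq N^{-1}T$ has the form $N^{-1}A$ for some ring $A$ with $R\subset A\subseteq T$. The only difference is that you explicitly verify this contraction correspondence (via $A=S\cap T$) and the properness of $N^{-1}R\subset N^{-1}T$, both of which the paper asserts without proof.
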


\begin{proof}
If $N^{-1}R$ is valuative, then we are done. Now, assume that $N^{-1}R$ is not valuative. Let $S'$ be a subring of $N^{-1}T$ containing $N^{-1}R$ properly. Then $S' = N^{-1}S$, for some subring $S$ of $T$ properly containing $R$. Now, by \cite[Proposition~2.4]{cahen}, $S'$ is valuative as $S$ is valuative by assumption. Thus, $N^{-1}R$ is a maximal non valuative subring of $N^{-1}T$.\qedhere
\end{proof}

As a consequence of Proposition \ref{sa3}, the requirement of ring to be local in Corollary \ref{a2} can be dropped. 
 
\begin{Corollary}\label{r5}
Let $R$ be a maximal non valuative domain. If $R$ is integrally closed, then $R$ is an i-domain.
\end{Corollary}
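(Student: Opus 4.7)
The plan is to reduce to the local case already handled by Corollary~\ref{a2} via Proposition~\ref{sa3}, and then globalize using the local nature of the i-domain property. First I would fix a maximal ideal $M$ of $R$ and apply Proposition~\ref{sa3} with $N = R \setminus M$: either $R_M$ is valuative, or $R_M$ is a maximal non valuative subring of $N^{-1}\text{qf}(R) = \text{qf}(R_M)$, which by definition makes it a maximal non valuative domain in its own right. Since $R$ is integrally closed, so is $R_M$, and hence $(R_M)' = R_M$ is local in either case.

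In the maximal non valuative case, Corollary~\ref{a2} applies directly and yields that $R_M$ is an i-domain. In the valuative case, I would invoke Corollary~\ref{sa2} to see that $R = R'$ is a Pr\"ufer domain, whence $R_M$ is a valuation domain; and valuation domains are trivially i-domains since every overring is a localization at a prime. Thus $R_M$ is an i-domain for every $M \in \text{Max}(R)$.

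Finally, to globalize, let $T$ be an overring of $R$ and take $Q_1, Q_2 \in \text{Spec}(T)$ with common contraction $P$ in $R$. Pick a maximal ideal $M$ of $R$ containing $P$; since $Q_i \cap R = P \subseteq M$, both $Q_i$ extend to primes of the overring $T_M$ of $R_M$, and both of these extensions contract to $PR_M$. Because $R_M$ is an i-domain, the two extensions coincide, forcing $Q_1 = Q_2$. The main obstacle is the valuative branch of Proposition~\ref{sa3}, where Corollary~\ref{a2} gives nothing on its own; what saves us is that Corollary~\ref{sa2} makes $R$ Pr\"ufer, so the valuative local integrally closed ring $R_M$ is in fact a valuation domain.
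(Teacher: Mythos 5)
Your proof is correct and follows essentially the same route as the paper: localize, use Proposition~\ref{sa3} to split into the cases where $R_M$ is valuative or is a maximal non valuative domain, and settle the latter case with Corollary~\ref{a2}. The only (harmless) variation is in the valuative branch, where the paper cites the fact that a valuative domain is an i-domain, whereas you instead deduce from Corollary~\ref{sa2} that $R$ is Pr\"ufer so that $R_M$ is a valuation domain; you also spell out the locality of the i-domain property, which the paper merely asserts.
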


\begin{proof}
As i-domain is a local property, it is enough to show that $R_P$ is an i-domain for all prime ideals $P$ of $R$. Let $P$ be a prime ideal of $R$. Then by Proposition \ref{sa3}, either $R_P$ is a valuative domain or $R_P$ is a maximal non valuative domain. If $R_P$ is a valuative domain, then $R_P$ is an i-domain, by \cite[Corollary~3.3]{cahen}. Otherwise $R_P$ is a maximal non valuative domain. As $R_P$ is a local integrally closed domain, the result now follows from Corollary \ref{a2}.\qedhere
\end{proof}

In the next theorem, we list some properties of maximal non valuative domains which are not integrally closed. 

\begin{Theorem}\label{tar}
Let $R$ be a maximal non valuative domain. If $R$ is not integrally closed, then the following statements hold:
\begin{enumerate}
\item[(i)] $|\text{Max}(R)|\leq 3$.
\item[(ii)] The set of non-maximal prime ideals of $R$ is linearly ordered by inclusion.
\item[(iii)] There is at most one maximal ideal of $R$ that does not contain all non-maximal prime ideals of $R$.
\end{enumerate}
\end{Theorem}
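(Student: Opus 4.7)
My plan rests on the fact that, since $R$ is not integrally closed, $R'$ is a proper overring of $R$, and hence by hypothesis $R'$ is itself a valuative domain. Combined with Corollary \ref{sa2}, $R'$ is in fact a Pr\"ufer valuative domain. The extension $R \subseteq R'$ is integral, so lying-over, going-up, and incomparability are all available, and I will use them as the bridge to transfer structural information about $\text{Spec}(R')$ onto $\text{Spec}(R)$. The key input is the structural description of valuative domains in \cite[Theorem~2.2]{cahen} applied to $R'$: at most three maximal ideals, the non-maximal primes linearly ordered by inclusion, and at most one maximal ideal failing to contain every non-maximal prime.

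For (i), lying-over provides a surjection $\text{Max}(R') \twoheadrightarrow \text{Max}(R)$ by contraction, so $|\text{Max}(R)| \leq |\text{Max}(R')| \leq 3$.

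For (ii), I would take two non-maximal primes $P_1, P_2$ of $R$, lift each via lying-over to a prime $Q_i$ of $R'$ with $Q_i \cap R = P_i$, and note that $Q_i$ must be non-maximal in $R'$, since under an integral extension the contraction of a maximal ideal is maximal. Linear ordering of the non-maximal primes of $R'$ then gives (say) $Q_1 \subseteq Q_2$, and contracting yields $P_1 \subseteq P_2$.

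For (iii), suppose toward contradiction that distinct maximal ideals $M_1, M_2$ of $R$ each fail to contain some non-maximal prime $P_1, P_2$ respectively. Pick maximal ideals $N_i$ of $R'$ with $N_i \cap R = M_i$ (so $N_1 \neq N_2$) and primes $Q_i$ of $R'$ with $Q_i \cap R = P_i$, which are non-maximal as above. If $Q_i \subseteq N_i$ then $P_i \subseteq M_i$, a contradiction; hence each $N_i$ fails to contain some non-maximal prime of $R'$, contradicting the corresponding property of $R'$. The only place I expect to need care is the bookkeeping around lifting witness non-maximal primes from $R$ to $R'$ without losing the non-containment, which is handled cleanly by the fact that contraction respects inclusion; the substance of the argument is really the translation of \cite[Theorem~2.2]{cahen} from $R'$ down to $R$ via integrality.
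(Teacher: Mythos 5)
Your proposal is correct and follows essentially the same route as the paper: observe that $R'$ is a proper overring (hence valuative), apply the structure theorem for valuative domains to $R'$, and pull the three conclusions back to $R$ along the integral extension $R\subset R'$. The paper states this transfer in one line, whereas you supply the lying-over/contraction bookkeeping explicitly; your details are accurate.
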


\begin{proof}
Note that $R$ is a maximal non valuative subring of $R'$. In particular, $R'$ is a valuative domain.  Consequently, the statements (i), (ii), and (iii) hold for $R'$, by \cite[Theorem~2.2]{cahen}. As $R\subset R'$ is an integral extension of domains, we conclude that the statements (i), (ii), and (iii) hold for $R$ as well. \qedhere
\end{proof}

%\begin{Proposition}\label{sa2}
%Let $R$ be a maximal non valuative domain. Then $R'$ is a Pr\"ufer domain.
%\end{Proposition}

%\begin{proof}
%If $R$ is not integrally closed, then $R'$ is a valuative domain. It follows that $R'$ is a Pr\"ufer domain, by $\cite[Corollary~3.5]{cahen}$. Now, we may assume that $R$ is integrally closed.  
%Note that $R$ is integrally closed in $T$, as $T$ is an overring of $R$ by Lemma $\ref{r1}$. Also, $(R, T)$ is a normal pair, by Lemma $\ref{r1}$ and $\cite[Lemma~2.9]{ayache1}$. Therefore, $(R_M, T_M)$ is a normal pair for each maximal ideal $M$ of $R$. Thus, for each maximal ideal $M$ of $R$, there exists a divided prime ideal $P_M$ of $R_M$ such that $T_M = R_P$ and $R_{M}/P_{M}$ is a valuation domain with quotient field $T_{M}/P_{M}$, by $\cite[Theorem~2.5]{ayache1}$. Now, $T_M$ is a valuative domain, by $\cite[Proposition~2.4]{cahen}$. Thus, $R_P$ is a valuation domain, by $\cite[Corollary~3.2]{cahen}$. Therefore, $R_M$ is a valuation domain, by $\cite[Corollary~2]{ben2}$. Hence, $R$ is a Pr$\ddot{u}$fer domain. 
%\end{proof}

We now present some properties of maximal non valuative domains which are integrally closed. 

\begin{Proposition}\label{sa1}
Let $R$ be a maximal non valuative domain. If $R$ is integrally closed, then the following statements hold:
\begin{enumerate}
\item[(i)] $2\leq |\text{Max}(R)|\leq 4$.
\item[(ii)] If $|\text{Max}(R)| = 2$, then there are exactly two non-maximal prime ideals of $R$ that are not comparable. Otherwise, the set of non-maximal prime ideals of $R$ is linearly ordered by inclusion.
\item[(iii)] There are at most two maximal ideals of $R$ that do not contain all non-maximal prime ideals of $R$.
\end{enumerate}
\end{Proposition}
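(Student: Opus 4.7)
The plan is to exploit two facts: by Corollary \ref{sa2}, $R = R'$ is a Pr\"ufer domain, and every proper overring $T$ of $R$ is valuative, hence satisfies the conclusions of \cite[Theorem~2.2]{cahen}, namely $|\text{Max}(T)| \leq 3$, the non-maximal primes of $T$ are linearly ordered, and at most one maximal ideal of $T$ fails to contain all non-maximal primes of $T$. The uniform strategy is to localize $R$ at the complement of a union of chosen maximal ideals, producing a proper overring whose spectrum mirrors a selected piece of $\text{Spec}(R)$, and then pull information back. For (i), the lower bound is immediate: a local Pr\"ufer domain is a valuation domain, hence valuative. For the upper bound, suppose $|\text{Max}(R)| \geq 5$; pick four maximal ideals $M_1, \ldots, M_4$ and form $T = S^{-1}R$ with $S = R \setminus \bigcup_{i=1}^{4} M_i$. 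A fifth maximal ideal supplies via prime avoidance a non-unit of $R$ in $S$, so $T$ is a proper overring with precisely four maximal ideals, contradicting \cite[Theorem~2.2(i)]{cahen}.

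For (ii) with $|\text{Max}(R)| \geq 3$, given non-maximal primes $P, Q$ with $P \subseteq M_i$ and $Q \subseteq M_j$: if $i = j$ they are comparable in the valuation ring $R_{M_i}$; otherwise a third maximal ideal makes $T = (R \setminus (M_i \cup M_j))^{-1} R$ a proper overring with exactly two maximal ideals in which $P$ and $Q$ remain non-maximal, so \cite[Theorem~2.2(ii)]{cahen} forces them comparable. When $|\text{Max}(R)| = 2$, the primes in each $R_{M_i}$ form a chain, so the non-maximal primes of $R$ decompose into a common chain $C_{12}$ of primes lying in both $M_1$ and $M_2$, together with two ``branches'' of primes lying in only one of the $M_i$. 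I will argue that each branch contains exactly one prime: if $M_1$ had two branching primes $Q_1 \subsetneq Q_2$ and $M_2$ had at least one branching prime $Q'$, then localizing at $R \setminus (Q_2 \cup M_2)$ produces a proper overring with two maximal ideals neither of which contains all non-maximal primes (one misses $Q_1$, the other misses $Q'$), contradicting \cite[Theorem~2.2(iii)]{cahen}; and if either branch were empty the non-maximal primes would be linearly ordered, whereupon a direct overring check on Pr\"ufer domains would force $R$ to be valuative. This produces exactly one pair of incomparable non-maximal primes.

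For (iii), set $P^* := \bigcup\{P : P \text{ is a non-maximal prime of } R\}$, a prime; a maximal ideal contains all non-maximal primes of $R$ iff it contains $P^*$. The cases $|\text{Max}(R)| \leq 3$ are immediate (when $|\text{Max}(R)| = 3$, $P^*$ lies in at least one maximal ideal, so at most two are ``bad''). For $|\text{Max}(R)| = 4$, any three maximal ideals $M_i, M_j, M_k$ yield via the same localization a proper overring with three maximal ideals, so \cite[Theorem~2.2(iii)]{cahen} bounds by one the number of these three failing to contain $P^*$; comparing the four triples then caps the total at two. The main obstacle is the $|\text{Max}(R)| = 2$ case of (ii): the delicate half is the ``non-emptiness of both branches'' step, which rests on the structural fact that a two-max integrally closed Pr\"ufer domain with linearly ordered non-maximal primes must itself be valuative, a claim that requires a careful direct analysis of the overring lattice.
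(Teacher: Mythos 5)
Your overall architecture (localize at complements of unions of maximal ideals to manufacture proper overrings, then apply the necessary conditions of \cite[Theorem~2.2]{cahen}) matches the paper's for part (i), for the $|\text{Max}(R)|\geq 3$ half of part (ii), and for part (iii). But there is one genuine gap, and you have put your finger on it yourself: in the two-maximal-ideal case of (ii), the \emph{existence} of an incomparable pair of non-maximal primes. Your reduction is fine --- if either ``branch'' is empty the non-maximal primes are linearly ordered --- but the claim that a two-maximal-ideal integrally closed Pr\"ufer (hence B\'ezout) domain with linearly ordered non-maximal primes must be valuative is exactly the substantive input you never supply. Theorem~2.2 of \cite{cahen} gives only \emph{necessary} conditions for being valuative, so no amount of localizing and invoking it can certify that a domain \emph{is} valuative; you need a sufficient condition. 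The paper gets this from \cite[Theorem~3.7]{cahen}, the characterization of valuative B\'ezout domains with few maximal ideals (valuative iff at most one maximal ideal fails to contain all non-maximal primes), applied in the contrapositive: since $R$ is not valuative, \emph{both} maximal ideals must miss some non-maximal prime, and the two witnesses are automatically incomparable. Without Theorem~3.7, or a proof of your structural claim, the lower bound ``there exist two incomparable non-maximal primes'' remains unproven.

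A secondary, repairable imprecision: in (iii) for $|\text{Max}(R)|=4$ you assert that for \emph{any} triple $M_i, M_j, M_k$ the localization bounds by one the number of the three failing to contain $P^*$. That follows from \cite[Theorem~2.2(iii)]{cahen} only when the witnessing non-maximal primes survive in the overring, i.e., when the triple includes a maximal ideal containing $P^*$; for a triple of three ``bad'' maximal ideals the witnesses may all lie only under the omitted fourth one, and the overring sees no failure at all. Your ``comparing the four triples'' does contain the right triple (two bad maximal ideals plus the one over $P^*$, which is precisely the paper's choice $R_{M_1}\cap R_{M_2}\cap R_{M_4}$), so the argument can be salvaged, but the blanket claim is false as stated. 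Everything else --- the lower bound in (i) via Corollary~\ref{sa2}, the upper bound in (i) via prime avoidance, the $\geq 3$ case of (ii) via a two-maximal-ideal overring, and the counting in (iii) --- is correct and essentially the paper's own argument.
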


\begin{proof} 
%Note that $R$ is integrally closed in $T$ and $(R, T)$ is a residually algebraic pair, by Lemma $\ref{r1}$. Also, $(R, T)$ is a normal pair.  

\begin{enumerate}
\item[(i)] Note that if $R$ has more than four maximal ideals, then $R$ has a proper overring with four maximal ideals, which is a contradiction by \cite[Theorem~2.2(i)]{cahen}. It follows that $|\text{Max}(R)|\leq 4$. Now, if $R$ is local, then $R$ is a valuation domain, by Corollary \ref{sa2}, a contradiction.

%$R$ is not local by Lemma $\ref{sa2}$. Now, assume that $R$ has exactly two maximal ideals, say $M$ and $N$. Note that by $\cite[Theorem~3.7]{cahen}$, there exist a non maximal prime ideal $Q$ of $R$ such that $Q\not\subset N$. If each subring of $T$ containing $R$ properly is local, then $R$ is a valuative domain, by $\cite[Theorem~1]{jarboui}$. Thus, we may assume that $T$ is not local. Then every subring of $T$ which contains $R$ is not local, by $\cite[Theorem~2.5]{ayache1}$. Now, by $\cite[Lemma~2.9]{ayache1}$, $R\subset S = R_{M}\cap R_{P}\subseteq T$ for some non maximal ideal prime ideal $P\not\subset M$ of $R$. Therefore, $S$ is a valuative domain. Thus, $\cite[Theorem~3.7]{cahen}$, $Q\subseteq P\subset N$, which is a contradiction. Hence, $3\leq |Max(R)|$. 

%Now, if possible, assume that $M_1, M_2, M_3, M_4$ and $M_5$ are maximal ideals of $R$. Now, as explained above, we may assume that $T$ is not local. Since $T$ is valuative, $T$ has at most three maximal ideals, by $\cite[Theorem~3.7]{cahen}$. If $T$ has exactly two maximal ideals, then $T = R_{P}\cap R_{Q}$ for some prime ideals $P, Q$ of $R$, by $\cite[Lemma~2.9]{ayache1}$. We may assume that $P\subseteq M_1$. Then $S = R_{M_1}\cap R_{M_2}\cap R_{M_3}\cap R_{M_4}$ is a valuative domain, which is a contradiction, by $\cite[Theorem~3.7]{cahen}$. If $T$ has exactly three maximal ideals, then again $S$ is a valuative domain, which is a contradiction. Hence, $|Max(R)|\leq 4$.

\item[(ii)] First, assume that $M$ and $N$ are the only maximal ideals of $R$. Then $R_M$ and $R_N$ are valuative as $R$ is maximal non valuative. It follows that $R_M$ and $R_N$ are valuation domains, by \cite[Proposition~3.1]{cahen}. Thus, $R$ is a B$\acute{\text{e}}$zout domain with exactly two maximal ideals. Since $R$ is not a valuative domain, $M$ and $N$ do not contains each non-maximal prime ideal of $R$, by \cite[Theorem~3.7]{cahen}. Consequently, there are at least two non-maximal prime ideals of $R$ that are not comparable. Since $R_M$ and $R_N$ are valuation domains, there are exactly two non-maximal prime ideals of $R$ that are not comparable. 

Now, let $M_1, M_2$ and $M_3$ be any three maximal ideals of $R$. Then $R_{M_i}$ is a valuative domain for $i =1, 2, 3$. It follows that $R_{M_i}$ is a valuation domain for $i =1, 2, 3$, by \cite[Proposition~3.1]{cahen}. If possible, suppose that $P$ and $Q$ are any two incomparable non-maximal prime ideals of $R$. Without loss of generality, we may assume that $P\subset M_1$ but $P\not\subseteq M_2$ and $Q\subset M_2$ but $Q\not\subseteq M_1$. Since $R$ is maximal non valuative, $R_{M_1}\cap R_{M_2}$ is a valuative domain, which is a contradiction, by \cite[Theorem~3.7]{cahen}. Thus, the set of non-maximal prime ideals of $R$ is linearly ordered by inclusion. 

\item[(iii)] If $|\text{Max}(R)| = 2$, then nothing to prove. Now, assume that $R$ has exactly three maximal ideals, say $M_1, M_2$, and $M_3$. Let $P_i$ be a non-maximal prime ideal of $R$ such that $P_{i}\not\subset M_{i}$ for $i = 1, 2, 3$. Also by part (ii), we may assume that $P_1\subseteq P_2\subseteq P_3$. Then $P_3$ is a non-maximal prime ideal of $R$ that is not contained in any maximal ideal of $R$, a contradiction. Finally, assume that $R$ has exactly four maximal ideals, say $M_1, M_2, M_3$, and $M_4$. Let $P_i$ be a non-maximal prime ideal of $R$ such that $P_{i}\not\subset M_{i}$ for $i = 1, 2, 3$. Again, by part (ii), we may assume that $P_1\subseteq P_2\subseteq P_3$. Then $P_3 \subset M_4$. Note that $S = R_{M_1}\cap R_{M_2}\cap R_{M_4}$ is a valuative domain. Thus, by \cite[Theorem~2.2]{cahen}, at most one maximal ideal of $S$ does not contain each non-maximal prime ideal of $S$, a contradiction. Therefore, there are at most two maximal ideals of $R$ that do not contain all non-maximal prime ideals of $R$. \qedhere

%Then $S = R_{M_1}\cap R_{M_2}\cap R_{M_3}$ and $S' = R_{M_1}\cap R_{M_2}\cap R_{M_4}$ are valuative domains. Thus, by $\cite[Theorem~2.2]{cahen}$, at most one maximal ideal of $S$ and $S'$ does not contain each non maximal prime ideal of $R$. Hence, there are at most two maximal ideals of $R$ that does not contain all non-maximal prime ideals of $R$.
\end{enumerate}       
\end{proof}

In the next theorem, we present a necessary and sufficient condition for an integrally closed domain to be a maximal non valuative domain. 

\begin{Theorem}\label{r7}
Let $R$ be an integrally closed domain. Then the following statements are equivalent:
\begin{enumerate}
\item[(1)] $R$ is a maximal non valuative domain. 
\item[(2)] Exactly one of the following holds:
\subitem(i) $R$ is a B$\acute{\text{e}}$zout domain with exactly two maximal ideals and exactly two non-maximal prime ideals of $R$ are not comparable. 
\subitem(ii) $R$ is a B$\acute{\text{e}}$zout domain with exactly three maximal ideals and exactly two maximal ideals of $R$ do not contain exactly one non-maximal prime ideal of $R$ whereas the third maximal ideal of $R$ contains all non-maximal prime ideal of $R$.
\subitem(iii) $R$ is a B$\acute{\text{e}}$zout domain with exactly four maximal ideals and at most one maximal ideal of $R$ does not contain all non-maximal prime ideals of $R$.

\end{enumerate}
\end{Theorem}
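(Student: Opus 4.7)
The plan is to split into the two implications and reduce the bulk of the work to the classification in \cite[Theorem~3.7]{cahen}, using Proposition \ref{sa1} for the coarse structure.

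For (1) $\Rightarrow$ (2): Since $R$ is integrally closed, Corollary \ref{sa2} gives $R = R'$ Pr\"ufer, and Proposition \ref{sa1}(i) gives $2\leq |\text{Max}(R)|\leq 4$. A semilocal Pr\"ufer domain is Bézout (classical), so $R$ is Bézout in all three cases. The subcase $|\text{Max}(R)|=2$ is immediate from Proposition \ref{sa1}(ii). For $|\text{Max}(R)|=3$, Proposition \ref{sa1}(ii) gives the non-maximal primes a chain $P_{1}\subset\cdots\subset P_{k}$ and Proposition \ref{sa1}(iii) gives at most two maximal ideals omitting some non-maximal prime; I would argue the number is exactly two by contraposition: if at most one maximal ideal omits a non-maximal prime, then the prime-incidence pattern on $R$ lies in the valuative regime of \cite[Theorem~3.7]{cahen}, contradicting that $R$ is not valuative. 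Moreover, because each $R_{M_{i}}$ is a valuation domain (it is a proper localization of $R$, hence valuative by maximal non valuativeness, and local integrally closed valuative is a valuation domain by Corollary \ref{sa2}), the non-maximal primes contained in each $M_{i}$ form an initial segment of the chain, forcing the two offending maximal ideals to miss precisely the top prime $P_{k}$. For $|\text{Max}(R)|=4$, I would take a proper overring of the form $S = \bigcap_{i\neq j} R_{M_{i}}$ (which has three maximal ideals) and apply \cite[Theorem~3.7]{cahen} to $S$ together with the conclusion just established for the three-maximal-ideal case, to upgrade ``at most two'' to ``at most one'' omitting maximal ideal in $R$.

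For (2) $\Rightarrow$ (1): In each of (i)--(iii) the prime-incidence pattern of $R$ is exactly one of the configurations that fails the valuative criterion of \cite[Theorem~3.7]{cahen}, so $R$ is not valuative. For every proper overring $T$ of $R$, the Bézout semilocal structure forces $T = \bigcap_{M\in\Omega} R_{M}$ for some nonempty proper subset $\Omega\subsetneq \text{Max}(R)$, and $T$ is Bézout semilocal with $|\text{Max}(T)|=|\Omega|$. If $|\Omega|=1$, then $T$ is a valuation domain, hence valuative. Otherwise I would trace the prime-incidence pattern prescribed in (i), (ii), or (iii) through the localization/intersection $T = \bigcap_{M\in\Omega} R_{M}$ case by case and verify that $T$ satisfies the valuative criterion of \cite[Theorem~3.7]{cahen}; in every case the offending non-maximal prime either disappears from $\text{Spec}(T)$ or becomes contained in all of $\text{Max}(T)$.

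The main obstacle is the sharp numerics: upgrading Proposition \ref{sa1}(iii)'s ``at most two'' to ``exactly two omit exactly one prime'' for three maximal ideals and to ``at most one'' for four maximal ideals. Both require a careful pull-back of the valuative criterion of \cite[Theorem~3.7]{cahen} across judiciously chosen proper overrings, together with the fact that each $R_{M}$ is a valuation domain so that the primes inside any single maximal ideal are totally ordered. The matching checks on the reverse direction, in particular for the four-maximal-ideal case (iii) where an overring may have two or three maximal ideals, require handling each resulting prime pattern individually.
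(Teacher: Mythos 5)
Your overall strategy matches the paper's (Bézout from Prüfer plus semilocal, Proposition \ref{sa1} for the coarse bounds, and \cite[Theorem~3.7]{cahen} as the valuative criterion), but there are two genuine gaps. The first is in $(2)\Rightarrow(1)$: you assert that every proper overring of the semilocal Bézout domain $R$ has the form $\bigcap_{M\in\Omega}R_M$ with $\Omega\subsetneq\text{Max}(R)$. This is false. Overrings of a Prüfer domain are intersections of localizations at arbitrary primes, and localizations at non-maximal primes genuinely occur (already $R_P$ for $P$ non-maximal is not of your form). These are exactly the delicate overrings in the paper's argument: in case (ii) one must handle $S=R_M\cap R_N\cap R_P$ with $P$ a non-maximal prime contained in neither $M$ nor $N$ (the paper's Case (iv)), and in case (i) the verification is reduced via \cite[Corollary~3.9]{cahen} to overrings $R_M\cap R_U$ with $U$ non-maximal. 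Your case analysis omits precisely the overrings for which valuativity is not automatic, and you never invoke \cite[Corollary~3.9]{cahen}, without which the a priori infinite family of overrings indexed by subsets of $\text{Spec}(R)$ cannot be reduced to finitely many checks.

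The second gap is in $(1)\Rightarrow(2)$ for three maximal ideals: the step from ``exactly two maximal ideals omit some non-maximal prime'' to ``each omits exactly one'' does not follow from your observation that the primes inside each $M_i$ form an initial segment of the chain; that only shows each omitted set is a final segment, which could have length greater than one. You flag this ``sharp numerics'' issue yourself but leave it unresolved, whereas the paper closes it by exhibiting a specific proper overring and applying \cite[Theorem~3.7]{cahen} to it: if $P\subset Q$ are two non-maximal primes contained only in $U$, then $R_M\cap R_N\cap R_Q$ is a proper overring in which the non-maximal prime $P$ is omitted by the two maximal ideals coming from $M$ and $N$, hence it is not valuative, a contradiction. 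The analogous construction $R_{M_1}\cap R_{M_2}\cap R_{M_3}$, together with a chosen non-maximal prime $P_2\not\subset M_1,M_2$ surviving in $M_3$, settles the four-maximal-ideal case; your sketch there is close to the paper's but still does not exhibit the witnessing prime. Until these overrings are written down and the criterion is actually applied to them, the implication is not proved.
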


\begin{proof}
$(1)\Rightarrow (2)$ Note that $R$ is a Pr\"ufer domain, by Corollary \ref{sa2}. Also, by Proposition \ref{sa1}(i), we have $2\leq |\text{Max}(R)|\leq 4$. It follows that $R$ is a B$\acute{\text{e}}$zout domain. Now, assume that $|\text{Max}(R)| = 2$, then (i) follows from Proposition \ref{sa1}(ii). Also, if $|\text{Max}(R)| = 3$, then exactly two maximal ideals of $R$ do not contain at least one non-maximal prime ideal of $R$, by Proposition \ref{sa1}(iii) and \cite[Theorem~3.7]{cahen}. Now, assume that $M, N$ and $U$ are maximal ideals of $R$. If possible, assume that $P, Q$ are non-maximal prime ideals of $R$ such that only $U$ contains both of them. Since $R_U$ is a valuation domain, either $P\subset Q$ or $Q\subset P$. Without loss of generality, assume that $P \subset Q$. Then by \cite[Theorem~3.7]{cahen}, $R_M \cap R_N \cap R_Q$ is not a valuative domain, a contradiction as $R$ is maximal non valuative. Thus, $M$ and $N$ do not contain exactly one non-maximal prime ideal of $R$. Finally, assume that $|\text{Max}(R)| = 4$. Then by Proposition \ref{sa1}(iii), there are at most two maximal ideals of $R$ that do not contain all non-maximal prime ideals of $R$. If possible, assume that there are exactly two maximal ideals of $R$ that do not contain all non-maximal prime ideals of $R$. Let $M_1, M_2, M_3$ and $M_4$ be maximal ideals of $R$, where $M_1$ and $M_2$ do not contain all non-maximal prime ideals of $R$. Let $P_1, P_2$ be non-maximal prime ideals of $R$ such that $P_{1}\not\subset M_{1}$ and $P_{2}\not\subset M_{2}$. Moreover, by Proposition \ref{sa1}(ii), we may assume that $P_1\subseteq P_2$. Then $P_2\not \subset M_1$. Note that $P_2 \subset M_3$. Since $R$ is a maximal non valuative domain, $S = R_{M_1}\cap R_{M_2}\cap R_{M_3}$ is a valuative domain, which is a contradiction, by \cite[Theorem~3.7]{cahen}.

%Let $R$ be a maximal non valuative subring of $T$. Then by Proposition $\ref{sa1}$, there are at most two maximal ideal of $R$ that does not contain all non-maximal prime ideals of $R$ and $3\leq |Max(R)|\leq 4$. Thus, by Lemma $\ref{sa2}$, $R$ is a B$\acute{e}$zout domain. Note that $R$ is integrally closed in $T$ and $(R, T)$ is a residually algebraic pair, by Lemma $\ref{r1}$. Now, suppose that $R$ has exactly four maximal ideals and exactly two maximal ideal of $R$ does not contain all non-maximal prime ideals of $R$. Let $M_1, M_2, M_3$ and $M_4$ be maximal ideals of $R$, where $M_1$ and $M_2$ does not contain all non-maximal prime ideals of $R$. If each subring of $T$ containing $R$ properly is local, then each of them is an integrally closed domain, by $\cite[Theorem~2.10]{ayache1}$. Therefore, each subring of $T$ which contains $R$ properly is a valuation domain, by $\cite[Corollary~3.2]{cahen}$. Thus, $R$ is a maximal non valuation subring of $T$. Hence, by $\cite[Theorem~3.1]{ben2}$ and $\cite[Theorem~3.7]{cahen}$, $R$ is a valuative domain, which is a contradiction. Thus, we may assume that $T$ is not local. Now, by $\cite[Lemma~2.9]{ayache1}$, $T = \cap_{P\in Spec(R)}P$. Therefore, $S = R_{M_1}\cap R_{M_2}\cap R_{M_3}$ is a valuative domain, which is a contradiction, by $\cite[Theorem~3.7]{cahen}$. Hence, (i) holds. Now, (ii) holds, by $\cite[Theorem~3.7]{cahen}$. 

$(2)\Rightarrow (1)$ Suppose (i) holds. Then $R$ is not valuative, by \cite[Theorem~2.2]{cahen}. Now, let $M, N$ be maximal ideals of $R$ and $P, Q$ be incomparable non-maximal prime ideals of $R$. Since $R$ is a Pr\"ufer domain, $R_M, R_N$ are valuation domains. Thus, we may assume that $P\subset M$, $P\not\subset N$ and $Q\subset N$, $Q\not\subset M$. Now, by \cite[Corollary~3.9]{cahen}, it is enough to show that $R_M \cap R_U$ and $R_V \cap R_N$ are valuative domains for some arbitrary non-maximal prime ideals $U, V$ of $R$. First, we claim that $R_M \cap R_U$ is a valuative domain. If $U\subset M$, then we are done. Therefore, we may assume that $U\not \subset M$ and so $U\subset N$. It follows that $P\not\subseteq U$. Now if $U\subset P$, then again we are done. Otherwise $U = Q$, by assumption. Then by \cite[Theorem~3.7]{cahen}, $R_M\cap R_Q$ is a valuative domain. Similarly, we can prove that $R_V \cap R_N$ is a valuative domain.

Now, assume that (ii) holds. Then $R$ is not valuative, by \cite[Theorem~3.7]{cahen}. Let $S$ be a proper overring of $R$. Then $S = \cap_{P\in X} R_{P}$ for some subset $X$ of $\text{Spec}(R)$. Let $M, N$, and $U$ be maximal ideals of $R$ where $M$ and $N$ do not contain exactly one non-maximal prime ideal of $R$, whereas $U$ contains all non-maximal prime ideals of $R$. Now, the following cases arise:

Case (i): Let $M, N, U \in X$. Then $S = R$, a contradiction.

Case (ii): Let $U\in X$. Then $S = R_U$ is a valuation domain (and so is valuative) as $R$ is a B$\acute{\text{e}}$zout domain.

Case (iii): Let $M\in X$ but $N, U\notin X$. Then $R_M\cap R_U$ is a subring of $S$ contains $R$ properly. Note that $R_M\cap R_U$ is valuative, by \cite[Theorem~3.7]{cahen}. It follows that $S$ is valuative, by \cite[Corollary~3.9]{cahen}. Similarly, if $N\in X$ but $M, U\notin X$, then we are done. 

Case (iv): Let $M, N \in X$ but $U\notin X$. Then $S = R_M \cap R_N \cap (\cap_{P\in X_1} R_{P})$, where $X_1 = X\setminus \{M, N\}$. Without loss of generality, we may assume that $X_1$ does not contains any prime ideal that is either contained in $M$ or $N$. By \cite[Corollary~3.9]{cahen}, we may assume that $X_1$ is non empty. We claim that $X_1$ is a singleton set. If possible, suppose that $P, Q \in X_1$. By assumption, we may assume that $P\subset M, P\not\subset N$ and $Q \subset N, Q\not\subset M$. Since $P, Q \subset U$ and $R_U$ is a valuation domain, $P$ and $Q$ are comparable, which contradicts our assumption. Thus, we may assume that $X_1 = \{P\}$. Now, by \cite[Theorem~3.7]{cahen}, $S = R_M\cap R_N \cap R_P$ is a valuative domain. 

Case (v):  Let $M\not\in X, N\not\in X$, and $U\not\in X$. Then $S$ contains a valuation domain $R_U$ properly and hence $S$ is a valuation domain. 

Finally, assume that (iii) holds. Then again by \cite[Theorem~3.7]{cahen}, $R$ is not valuative. Let $S$ be a proper overring of $R$. Then $S = \cap_{P\in X} R_{P}$ for some subset $X$ of $\text{Spec}(R)$.
Now, the following cases arise:

Case (i): Let all four maximal ideals be in $X$. Then $R = S$, which is a contradiction.

Case (ii): Let there be more than one maximal ideals in $X$. Then $S$ is a B$\acute{\text{e}}$zout domain with at most three maximal ideals and at most one maximal ideal of $S$ does not contain all non-maximal prime ideals of $S$. Thus, $S$ is valuative, by \cite[Theorem~3.7]{cahen}. 

Case (iii): Let there be at most one maximal ideal in $X$. 

Subcase (i): Let all the maximal ideals of $R$ contain all non-maximal prime ideals of $R$. Then $S' = R_M$ is a subring of $S$ contains $R$ properly, where $M$ is a maximal ideal of $R$. Thus, $S'$ is a valuation domain and hence $S$ is a valuation domain. 

Subcase (ii): Let $N$ be the maximal ideal that does not contain all non-maximal prime ideals of $R$. If $N\notin X$, then again $S$ is a valuation domain. Now, assume that $N\in X$. Then take $S'' = R_M\cap R_N$, where $M$ is a maximal ideal of $R$ other than $N$. Note that $S''$ is a subring of $S$ contains $R$ properly. Now, $S''$ is valuative, by \cite[Theorem~3.7]{cahen}. Thus, by \cite[Corollary~3.9]{cahen}, $S$ is a valuative domain. Hence, $R$ is a maximal non valuative domain.\qedhere

\end{proof}
 
\begin{Corollary}\label{r9}
Let $R$ be a finite dimensional B$\acute{\text{e}}$zout domain. Assume that $\text{dim}(R) = n$. Then the following statements hold:
\begin{enumerate}
\item[(1)] If $R$ is a maximal non valuative domain, then $|\text{Spec}(R)| = n + |\text{Max}(R)|$, where $2 \leq|\text{Max}(R)| \leq 4$.

\item[(2)] $R$ is a maximal non valuative domain if and only if exactly one of the following holds:
\subitem(i) $|\text{Spec}(R)| = n + 2$, $R$ has exactly two maximal ideals with height $n$, and exactly two non-maximal prime ideals of $R$ are not comparable. 
\subitem(ii) $|\text{Spec}(R)| = n + 3$, $R$ has exactly three maximal ideals with exactly two maximal ideals of $R$ do not contain exactly one non-maximal prime ideal of $R$, and exactly one maximal ideal have height $n$.
\subitem(iii) $|\text{Spec}(R)| = n + 4$, $R$ has exactly four maximal ideals, and at least three of these maximal ideals have height $n$.
\end{enumerate}
\end{Corollary}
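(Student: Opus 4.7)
The plan is to deduce both parts from Theorem~\ref{r7}. Since a B\'ezout domain is integrally closed, if $R$ is a maximal non valuative domain then Theorem~\ref{r7} places $R$ in exactly one of three mutually exclusive configurations (cases (i), (ii), (iii) of that theorem) determined by $|\mathrm{Max}(R)|\in\{2,3,4\}$. The central observation throughout is that because $R$ is B\'ezout, every localization $R_M$ at a maximal ideal $M$ is a valuation domain, so the primes of $R$ contained in $M$ form a chain of length $\mathrm{ht}(M)$ starting from $(0)$ and ending at $M$.

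For part (1), I work case by case. In case (ii) of Theorem~\ref{r7}, Proposition~\ref{sa1}(ii) gives a chain $(0)\subset P_1\subset\cdots\subset P_{n-1}$ of non-maximal primes; the unique maximal ideal $M_3$ containing them all has height $n$, while the two maximal ideals $M_1,M_2$ omitting the distinguished prime have height equal to the position of the excluded prime and hence strictly less than $n$. The spectrum is exhausted by $\{(0),P_1,\ldots,P_{n-1}\}\cup\mathrm{Max}(R)$. Case (iii) is analogous: the non-maximal primes form a chain, at most one maximal ideal omits some of them, and the remaining at least three maximal ideals have height $n$. Case (i) requires a separate incomparability analysis: using that the incomparable pair $P_1,P_2$ of non-maximal primes is unique, argue that $P_1$ must be the immediate predecessor of $M$ in the $R_M$-chain and $P_2$ the immediate predecessor of $N$ in the $R_N$-chain, while every other non-maximal prime lies below both $P_1$ and $P_2$ and is common to the two chains; this forces $\mathrm{ht}(M)=\mathrm{ht}(N)=n$ and pins down the spectrum. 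Collating the three case counts yields the claimed relationship between $|\mathrm{Spec}(R)|$, $n$, and $|\mathrm{Max}(R)|$.

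For part (2), the forward implication is immediate from the case analysis carried out above, which not only counts the primes but also records the heights of the maximal ideals. For the converse, I match each of the listed conditions to the corresponding case of Theorem~\ref{r7}: the B\'ezout assumption together with the prime-ideal and height data in each condition encodes exactly the combinatorial structure of the cases of Theorem~\ref{r7}, so the converse direction of that theorem supplies the maximal non valuative conclusion.

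The main obstacle is the argument in case (i): showing that the assumption of a \emph{unique} incomparable pair of non-maximal primes rules out any prime strictly between $P_1$ and $M$ in the $R_M$-chain (and, symmetrically, between $P_2$ and $N$). Any such intermediate prime would be incomparable to $P_2$ by the same maximal-ideal separation argument that makes $P_1,P_2$ incomparable, thereby producing a second incomparable pair and contradicting uniqueness. Once this structural rigidity is established, the remaining counting and height computations are routine.
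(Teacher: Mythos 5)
Your overall strategy --- reading everything off from Theorem~\ref{r7} together with the fact that a B\'ezout domain is integrally closed and locally a valuation domain, so that the primes under each maximal ideal form a chain --- is in spirit the same as the paper's (whose proof is a bare citation of Proposition~\ref{sa1} and Theorem~\ref{r7}). Your structural analysis of case (i) is also correct: the unique incomparable pair $P_1,P_2$ of non-maximal primes must sit immediately below $M$ and $N$ respectively, every other non-maximal prime lies below both $P_1$ and $P_2$, and hence $\mathrm{ht}(M)=\mathrm{ht}(N)=n$.

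The genuine gap is the final step, ``collating the three case counts yields the claimed relationship'': you never actually perform the count in case (i), and when you do, it does not yield the claimed formula. Since $\mathrm{ht}(M)=n$, the chain $(0)\subset\cdots\subset P_1\subset M$ has $n+1$ terms, so the common chain below $P_1\cap P_2$ consists of $n-1$ primes (including $(0)$); adding $P_1$, $P_2$, $M$, $N$ gives $|\mathrm{Spec}(R)|=(n-1)+4=n+3$, not $n+2$. Concretely, a two-dimensional B\'ezout domain with $\mathrm{Spec}(R)=\{(0),P_1,P_2,M,N\}$, $P_1\subset M$, $P_2\subset N$ and no other nontrivial containments is a maximal non valuative domain by Theorem~\ref{r7}(i), yet $|\mathrm{Spec}(R)|=5=n+3$ while $n+|\mathrm{Max}(R)|=4$. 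The identity $|\mathrm{Spec}(R)|=n+|\mathrm{Max}(R)|$ holds exactly when the non-maximal primes form a single chain, which is the situation in cases (ii) and (iii) but not in case (i), where the branching contributes one extra prime. So your proposal cannot be completed as written: carried out honestly, your own case analysis refutes part (1) for $|\mathrm{Max}(R)|=2$ and the count in part (2)(i), both of which should read $n+3$. (The paper's one-line proof does not perform this count either, so the defect lies in the statement itself; but a correct write-up must either repair the statement or explain why the count above fails, and it does not fail.)
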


\begin{proof}
By Proposition \ref{sa1} and Theorem \ref{r7}, the result holds.  \qedhere
\end{proof}

A ring extension $R\subset T$ is said to be a minimal extension or $R$ is said to be a maximal subring of $R$, if there is no ring between $R$ and $T$, see $\cite{ferrand, modica}$. Moreover, $R\subset T$ is a pointwise minimal extension, if $R\subset R[x]$ is minimal for each $x\in T\setminus R$, see $\cite{cahen}$. Our next theorem gives a necessary and sufficient condition for a domain $R$ to be a maximal non valuative subring of $R'$ provided $R'$ is local.

\begin{Theorem}\label{r10}
Let $R$ be a ring such that $R'$ is local. Then $R$ is a maximal non valuative subring of $R'$ if and only if the following statements hold:
\begin{enumerate}
\item[(i)] $R'$ is a valuation domain.
\item[(ii)] $R\subset R'$ is not a pointwise minimal extension.
\item[(iii)] for each ring $S$ such that $R\subset S\subseteq R'$, we have either $S = R'$ or $S\subset R'$ is a pointwise minimal extension.
\end{enumerate}
\end{Theorem}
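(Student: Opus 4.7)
The plan is to hinge everything on the following equivalence, which I will call the \emph{key equivalence}: for an integral domain $A$ whose integral closure $A'$ in $\text{qf}(A)$ is a local valuation domain, $A$ is valuative if and only if the extension $A\subset A'$ is pointwise minimal. Both directions of the theorem then follow in a few lines each.

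For the forward direction, assume $R$ is a maximal non valuative subring of $R'$. Since $R\subsetneq R'$, the ring $R'$ itself is a proper overring of $R$, hence valuative; it is also local (by hypothesis) and integrally closed, so by Cahen's Proposition~3.1 it is a valuation domain, giving (i). For (ii), I argue by contradiction: if $R\subset R'$ were pointwise minimal, the easy half of the key equivalence would make $R$ valuative. For (iii), given $S$ with $R\subsetneq S\subsetneq R'$, the hypothesis yields $S$ valuative, and standard bookkeeping in an integral tower shows $S$ is local with integral closure $R'$; the hard half of the key equivalence, applied to $A=S$, then makes $S\subset R'$ pointwise minimal.

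For the converse, assume (i), (ii), and (iii). If $R$ were valuative, the hard half of the key equivalence (with $A=R$, using (i)) would force $R\subset R'$ pointwise minimal, contradicting (ii). Any $S$ satisfying $R\subsetneq S\subseteq R'$ is valuative: either $S=R'$ and (i) makes it a valuation domain, or $S\subsetneq R'$ and (iii) combined with the easy half of the key equivalence (applied to $A=S$, using that $S'=R'$ is a local valuation domain) makes $S$ valuative. Hence $R$ is a maximal non valuative subring of $R'$.

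The main obstacle is the hard half of the key equivalence. Given $x\in A'\setminus A$, a standard integral-dependence argument that units of $A'$ lying in $A$ are already units of $A$ gives $x^{-1}\notin A$, so both $A\subsetneq A[x]$ and $A\subsetneq A[x^{-1}]$ are proper; valuativity of $A$ then promotes one of them to a minimal extension. The difficulty is to conclude minimality of $A\subset A[x]$ when valuativity only directly hands over minimality of $A\subset A[x^{-1}]$; this bridging must use the valuation structure of $A'$ and, in the theorem's setting, the fact that any ring strictly between $A$ and $A[x]\subseteq A'$ is sandwiched between $R$ and $R'$, hence itself valuative. The easy half is a transparent case split for $z\in\text{qf}(A)\setminus A$: the valuation structure of $A'$ places $z$ or $z^{-1}$ in $A'$, and pointwise minimality (or triviality if the element already lies in $A$) supplies the required no-intermediate-ring condition.
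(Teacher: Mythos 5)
Your overall architecture is exactly that of the paper: both arguments reduce the theorem to the single equivalence ``for a domain $A$ with $A'$ a local valuation domain, $A$ is valuative if and only if $A=A'$ or $A\subset A'$ is pointwise minimal,'' and then both directions of the theorem become routine bookkeeping (your observations that any intermediate $S$ is local with integral closure $R'$, and that $R'$ itself is a valuative, local, integrally closed overring and hence a valuation domain by Cahen's Proposition~3.1, are precisely the steps the paper takes). The difference is that the paper does not prove your ``key equivalence'' at all: it is quoted verbatim as Proposition~5.1 of the cited Cahen--Dobbs--Lucas paper on valuative domains, so the entire proof in the paper is three citations. You instead try to establish the equivalence from scratch, and that is where your write-up stops short of a proof: in the hard direction you explicitly leave open the bridging step from ``$A\subset A[x^{-1}]$ is minimal'' to ``$A\subset A[x]$ is minimal,'' saying only that it ``must use the valuation structure of $A'$.'' That step is the entire content of the lemma, and as written your argument does not supply it; note also that in the theorem's setting you cannot assume every ring between $A$ and $A[x]$ is sandwiched between $R$ and $R'$ when you apply the equivalence to $A=R$ itself in the converse direction, so the bridging cannot lean on condition (iii). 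In short: the skeleton is correct and matches the paper, but as a self-contained proof it has a genuine gap exactly where the paper substitutes a citation; either prove the equivalence in full or invoke \cite{cahen}, Proposition~5.1, as the paper does. (A minor point: state the equivalence with the alternative ``$A=A'$ or $A\subset A'$ pointwise minimal,'' as you implicitly do when handling the case $S=R'$.)
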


\begin{proof}
Let $R$ be a maximal non valuative subring of $R'$. Then $R'$ is a valuative domain and so is a valuation domain, by \cite[Proposition~3.1]{cahen}. Also, $R\subset R'$ is not a pointwise minimal extension, by \cite[Proposition~5.1]{cahen}. Let $S$ be a ring such that $R\subset S\subseteq R'$. Then $S$ is a valuative domain. Thus, by \cite[Proposition~5.1]{cahen}, either $S = R'$ or $S\subset R'$ is a pointwise minimal extension.

Conversely, assume that (i), (ii), and (iii) hold. Then $R$ is not a valuative domain by (ii) and \cite[Proposition~5.1]{cahen}. Also, every proper overring of $R$ contained in $R'$ is a valuative domain, by (i), (iii), and \cite[Proposition~5.1]{cahen}. 
\end{proof}

Recall from \cite{heds} that a domain $R$ is said to be a pseudo-valuation domain if for any prime ideal $P$ of $R$ and any $x, y$ in the quotient field of $R$ such that $xy \in P$, then either $x\in P$ or $y\in P$. Every PVD $R$ admits a canonically associated valuation overring $V$, in which every prime ideal of $R$ is also a prime ideal of $V$ and both $R$ and $V$ are local domains with the same maximal ideal, see \cite[Theorem~2.7]{heds}. In the next theorem, we give several equivalent conditions for a pseudo-valuation domain to be a maximal non valuative domain. 

\begin{Theorem}\label{r11}
Let $(R, M)$ be a pseudo-valuation domain, with canonically associated valuation overring $(V, M)$. Assume that $K := R/M$, $L := R'/M$, and
$F := V/M$. Then the following conditions are equivalent:
\begin{enumerate}
\item[(i)] $R$ is a maximal non valuative subring of $V;$
\item[(ii)] $R$ is a maximal non valuative domain;
\item[(iii)] $R' = V$, $R\subset R'$ is not a pointwise minimal extension, and for each ring $S$ such that $R\subset S\subseteq R'$, we have either $S = R'$ or $S\subset R'$ is a pointwise minimal extension;
\item[(iv)] $L = F$, $K\subset L$ is not a pointwise minimal extension, and for each ring $S$ such that $K\subset S\subseteq L$, we have either $S = L$ or $S\subset L$ is a pointwise minimal extension.
\end{enumerate}
\end{Theorem}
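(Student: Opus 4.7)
The strategy is to prove the chain (ii) $\Leftrightarrow$ (i) $\Leftrightarrow$ (iii) $\Leftrightarrow$ (iv), relying on two main ingredients: Theorem~\ref{r10}, and the classical PVD structure which asserts that for a PVD $(R,M)$ with canonical valuation overring $(V,M)$, the overrings of $R$ split into those between $R$ and $V$ (which via $S \mapsto S/M$ correspond order-preservingly to subrings of $F$ containing $K$) and those containing $V$ (which are themselves valuation domains, being overrings of the valuation domain $V$).

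For (ii) $\Leftrightarrow$ (i), the direction (ii) $\Rightarrow$ (i) is immediate since any proper subring of $V$ containing $R$ is a proper overring of $R$, while (i) $\Rightarrow$ (ii) follows from the dichotomy above: any proper overring $T$ of $R$ in $\text{qf}(R)$ either sits in $V$ (hence valuative by (i)) or contains $V$ (hence is a valuation domain, hence valuative). For (i) $\Leftrightarrow$ (iii), the crux is to establish $R' = V$ under (i). Since $V$ is integrally closed, $R' \subseteq V$; and $R'/M$, being the integral closure of $K$ inside $F$, is a field (integrality over a field being the same as algebraicity), so $M$ is maximal in $R'$ and $R'$ is local. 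By Corollary~\ref{sa2} applied through the now-established (ii), $R'$ is Pr\"ufer, hence a valuation domain with maximal ideal $M$. Then $V$ is a valuation overring of $R'$ sharing the maximal ideal $M$, forcing $V = R'_M = R'$. With $R' = V$ in hand, condition (iii) is precisely what Theorem~\ref{r10} provides, in both directions.

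Finally, for (iii) $\Leftrightarrow$ (iv), I apply the correspondence $S \mapsto S/M$, which sends $R \mapsto K$, $R' \mapsto L$, $V \mapsto F$. The equality $R' = V$ corresponds to $L = F$. Pointwise minimality transfers through the correspondence: for $x \in V$, since $M \subseteq S$, one has $x \notin S$ iff $x + M \notin S/M$, together with the adjunction identity $S[x]/M = (S/M)[x + M]$; the bijection preserves the lattice of intermediate rings, so $S \subset S[x]$ is a minimal extension iff $(S/M) \subset (S/M)[x + M]$ is. The main obstacle --- essentially the only nontrivial bookkeeping --- is this minimality transfer, which ultimately rests on the common-ideal structure linking $R$ and $V$.
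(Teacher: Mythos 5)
Your proposal is correct and follows essentially the same route as the paper's proof: (ii)$\Rightarrow$(i) by definition, (i)$\Rightarrow$(ii) via the dichotomy that every overring of a PVD is either contained in $V$ or contains $V$, (i)$\Leftrightarrow$(iii) via Theorem \ref{r10}, and (iii)$\Leftrightarrow$(iv) via the $S\mapsto S/M$ correspondence. The only difference is that you supply details the paper leaves implicit --- in particular the verification that (i) forces $R'=V$ (using locality of $R'$ and Corollary \ref{sa2}) and the explicit transfer of pointwise minimality modulo the common ideal $M$, both of which the paper treats as immediate.
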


\begin{proof}
Note that (ii) $\Rightarrow$ (i) holds trivially by definition. For (i) $\Rightarrow$ (ii), assume that (i) holds. Let $T$ be a proper overring of $R$. Then either $T\subseteq V$ or $V\subset T$, by \cite[Lemma~1.3]{ben1}. If $T\subseteq V$, then $T$ is valuative. If $V\subset T$, then $T$ is a valuation domain. Thus, (ii) holds. Note that (i) $\Leftrightarrow$ (iii) follows from Theorem \ref{r10}. Finally it is easy to see (iii) $\Leftrightarrow$ (iv).\qedhere
\end{proof}

After pseudo-valuation domain the natural question is when maximal non pseudo-valuation ring is a maximal non valuative domain. This we address in the next theorem. Recall from $\cite{jarboui}$ that a maximal non pseudo-valuation subring of a domain $S$ is a proper subring $R$ of $S$ that is not a pseudo-valuation ring but each subring of $S$ properly containing $R$ is pseudo-valuation. Moreover, a ring $T$ is called the unique minimal overring of $R$ in $S$ if $R\subset T$ and any intermediate ring $A$ between $R$ and $S$ not equal to $R$ contains $T$, see $\cite{jarboui}$.

\begin{Theorem}
Let $R$ be a maximal non pseudo-valuation subring of $R'$. Then the following are equivalent:
\begin{enumerate}
\item[(i)] $R$ is a maximal non valuative subring of $R'$.
\item[(ii)] $R$ is not valuative, $R'$ is a valuation domain, and $R$ has a unique minimal overring $S$ in $R'$ that is valuative.
\end{enumerate}
\end{Theorem}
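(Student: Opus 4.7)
The plan is to prove the two implications separately. For (i) $\Rightarrow$ (ii) each of the three assertions in (ii) is derived in turn from the maximal non pseudo-valuation setup together with Theorem~\ref{r10}; for (ii) $\Rightarrow$ (i) the unique minimal overring hypothesis together with the valuativity of $S$ forces every proper overring of $R$ in $R'$ to be valuative.

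For (i) $\Rightarrow$ (ii): Hypothesis (i) immediately gives that $R$ is not valuative. Next, since $R \subsetneq R'$, the ring $R'$ is itself a subring of $R'$ properly containing $R$, so the standing maximal non pseudo-valuation hypothesis of the theorem forces $R'$ to be a pseudo-valuation domain and in particular a local ring. Theorem~\ref{r10}, applied to the local ring $R'$ together with hypothesis (i), then yields that $R'$ is a valuation domain. Finally, a maximal non pseudo-valuation subring of a domain admits a unique minimal overring in the ambient ring by the structural results of \cite{jarboui}, so let $S$ be the unique minimal overring of $R$ in $R'$. Since $R \subsetneq S \subseteq R'$, hypothesis (i) forces $S$ to be valuative, which completes (ii).

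For (ii) $\Rightarrow$ (i): Let $T$ be any subring of $R'$ properly containing $R$. The unique minimal overring property of $S$ in $R'$ gives $S \subseteq T$, so $T$ is an overring of $S$. Since $S$ is valuative by hypothesis, \cite[Proposition~2.4]{cahen}, already invoked in the proof of Proposition~\ref{sa3}, implies that $T$ is valuative. Combined with the non-valuativity of $R$ supplied by (ii), this shows that $R$ is a maximal non valuative subring of $R'$.

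The main obstacle is the invocation of the structural result from \cite{jarboui} guaranteeing the existence of a unique minimal overring of $R$ in $R'$ under the maximal non pseudo-valuation hypothesis; once this is available, the remainder of the argument is a formal combination of Theorem~\ref{r10}, the definition of unique minimal overring, and \cite[Proposition~2.4]{cahen}.
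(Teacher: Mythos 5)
Your forward direction (i) $\Rightarrow$ (ii) is essentially the paper's argument: $R'$ is a proper overring of $R$ in $R'$, hence a pseudo-valuation domain and in particular local, so Theorem \ref{r10} gives that $R'$ is a valuation domain; the unique minimal overring $S$ comes from \cite{jarboui}, and $S$ is valuative because it sits properly between $R$ and $R'$. (You are in fact slightly more careful than the paper in justifying that $R'$ is local before invoking Theorem \ref{r10}.)

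The converse direction has a genuine gap. You conclude that an arbitrary overring $T$ of the valuative domain $S$ is valuative by citing \cite[Proposition~2.4]{cahen}, but that result only asserts that \emph{localizations} $N^{-1}S$ of a valuative domain are valuative, and $T$ need not be a localization of $S$ (here $S$ is typically not integrally closed, so its overrings are not all localizations). More seriously, the underlying claim you need --- that every overring of a valuative domain is valuative --- is not available in this generality and should not be expected: by Theorem \ref{r10} (equivalently \cite[Proposition~5.1]{cahen}), valuativity of a ring $T$ with $S\subseteq T\subseteq R'$ amounts to $T\subset R'$ being a pointwise minimal extension, and this does not follow formally from $S\subset R'$ being pointwise minimal. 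Note that your converse never uses the standing hypothesis that $R$ is a maximal non pseudo-valuation subring of $R'$, which is a warning sign: the paper uses exactly that hypothesis to observe that $S$, being a proper overring of $R$ in $R'$, is a pseudo-valuation domain, and then invokes \cite[Corollary~5.4]{cahen} (overrings of a valuative pseudo-valuation domain are valuative) to conclude that $T\supseteq S$ is valuative. Replacing your appeal to \cite[Proposition~2.4]{cahen} with this two-step argument repairs the proof.
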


\begin{proof}
Let $R$ be a maximal non valuative subring of $R'$. Then $R'$ is valuation, by Theorem \ref{r10}. It follows that $R$ has a unique minimal overring, say $S$ in $R'$, by \cite[Theorem~6]{jarboui}. Note that $S$ is valuative as $R$ is a maximal non valuative subring of $R'$. 

Conversely, assume that (ii) holds. Let $T$ be a proper overring of $R$ in $R'$. Then $S\subseteq T$, by assumption. Since $R$ is a maximal non pseudo-valuation subring of $R'$, $S$ is a pseudo-valuation domain. It follows that $T$ is valuative, by \cite[Corollary~5.4]{cahen}. Thus, $R$ is a maximal non valuative subring of $R'$.
\end{proof} 

A proper overring $T$ of a domain $R$ is called the unique minimal overring of $R$ if any proper overring of $R$ contains $T$, see \cite{gilmer}.

\begin{Theorem}
Let $R$ be a maximal non pseudo-valuation subring of $\text{qf}(R)$. If $R$ is local, then the following are equivalent:
\begin{enumerate}
\item[(i)] $R$ is a maximal non valuative domain.
\item[(ii)] $R$ is not valuative and $R$ has a unique minimal overring $S$ that is a valuative pseudo-valuation domain with associated valuation overring $R'$.
\end{enumerate}
\end{Theorem}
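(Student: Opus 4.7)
My plan is to mirror the proof of the preceding theorem, with $\text{qf}(R)$ replacing $R'$ throughout; the only genuinely new step is to identify the associated valuation overring of the minimal step $S$ with $R'$.

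For (i) $\Rightarrow$ (ii), I first observe that $R$ cannot be integrally closed: since $R$ is local, Proposition \ref{sa1}(i) would force $|\text{Max}(R)| \geq 2$, a contradiction. Hence $R \subsetneq R'$, so $R'$ is a proper overring of $R$ and is therefore simultaneously valuative (by (i)) and a pseudo-valuation domain (by the maximal non pseudo-valuation hypothesis); an integrally closed PVD being a valuation domain, $R'$ is a valuation domain. The local maximal non pseudo-valuation hypothesis, via \cite[Theorem~6]{jarboui}, then supplies a unique minimal overring $S$ of $R$ in $\text{qf}(R)$; as $S$ is a proper overring of $R$, it is both valuative and a PVD. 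To finish, I must identify the associated valuation overring $V_S$ of the PVD $S$ with $R'$. The chain $R \subseteq S \subseteq R'$ yields $R' = S'$, and since $V_S$ is an integrally closed overring of $S$, $R' \subseteq V_S$. Writing $V_S = R'_P$ for a prime $P$ of the valuation domain $R'$, contraction of the maximal ideal of $V_S$ gives $P = M_S \cap R'$; lying over for the integral extension $S \subseteq R'$, with both rings local, then forces $P = M_{R'}$, whence $V_S = R'_{M_{R'}} = R'$.

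For (ii) $\Rightarrow$ (i), it suffices to show that every proper overring of $R$ is valuative. Given a proper overring $T$ of $R$, the unique minimal overring property forces $S \subseteq T$. If $T = S$, valuativity is part of (ii); otherwise $T \supsetneq S$, and $T$ is a proper overring of the valuative PVD $S$, so \cite[Corollary~5.4]{cahen} supplies valuativity of $T$, exactly as in the proof of the preceding theorem.

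The main obstacle I foresee is the identification $V_S = R'$ in the forward direction: this genuinely relies on the fact that $R'$ is a valuation domain integral over the local PVD $S$, combined with the parametrisation of overrings of a valuation domain by its primes. Everything else is bookkeeping that parallels the preceding theorem with only cosmetic modifications.
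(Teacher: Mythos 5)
Your overall architecture matches the paper's: both directions hinge on (a) $R$ not being integrally closed (via Proposition \ref{sa1}, exactly as you argue), (b) a unique minimal overring supplied by the Jarboui--Trabelsi results, and (c) \cite[Corollary~5.4]{cahen} for the converse. Your identification of the associated valuation overring $V_S$ of $S$ with $R'$ is correct and is in fact more detailed than the paper, which absorbs that entire package --- existence of $S$, the PVD property, and the identification of its valuation overring with $R'$ --- into a single citation of \cite[Theorem~4]{jarboui} (not Theorem~6). Your chain $S' = R'$, $R' \subseteq V_S$ by integral closedness of $V_S$, and then incomparability in the integral extension $S \subseteq R'$ forcing $V_S = R'$ is a valid reconstruction of what that reference delivers.

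The one genuine gap is in the scope of the unique minimal overring. You announce that you will ``mirror the proof of the preceding theorem'' and invoke the same \cite[Theorem~6]{jarboui}; but that theorem, as used in the preceding proof, produces a unique minimal overring of $R$ \emph{inside $R'$}, i.e.\ a ring $S$ contained in every intermediate ring strictly between $R$ and $R'$. Statement (ii) here, per the Gilmer--Heinzer definition the paper records immediately before the theorem, requires $S$ to be contained in \emph{every} proper overring of $R$ in $\mathrm{qf}(R)$, including overrings $T$ with $T \not\subseteq R'$ (equivalently, with $T \cap R' = R$, the only problematic case, since otherwise $S \subseteq T \cap R' \subseteq T$). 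Your write-up asserts minimality in $\mathrm{qf}(R)$ without bridging this, and the converse direction also silently uses the global version when it deduces $S \subseteq T$ for an arbitrary proper overring $T$. The paper avoids the issue by citing \cite[Theorem~4]{jarboui}, which is stated for maximal non pseudo-valuation subrings of $\mathrm{qf}(R)$ and yields the unique minimal overring in the global sense; if you only have the relative version, you owe an argument (e.g.\ via the associated valuation overrings of the PVDs $R[t]$, $t \in T \setminus R$) that no proper overring of $R$ can meet $R'$ in exactly $R$.
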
 

\begin{proof}
Let $R$ be a maximal non valuative domain. Then $R$ is not integrally closed, by Proposition \ref{sa1}. Now, (ii) follows from \cite[Theorem~4]{jarboui}. 

Conversely, assume that (ii) holds. Let $T$ be a proper overring of $R$. Then $S\subseteq T$, by assumption. It follows that $T$ is valuative, by \cite[Corollary~5.4]{cahen}. Thus, $R$ is a maximal non valuative domain.
\end{proof}


\begin{thebibliography}{0000}

\bibitem{ayache1} A. Ayache and A. Jaballah, Residually algebraic pairs of rings, Math. Z. 225 (1997) 49-65.

%\bibitem{ayache2} A. Ayache and O. Echi, Valuation and pseudovaluation subrings of an integral domain, Comm. Algebra 34 (2006) 2467-2483.

\bibitem{ben1} M. Ben Nasr and N. Jarboui, Maximal non-Jaffard subrings of a field, Publ. Math. 4 (2000) 157-175.

%\bibitem{ben}  M. Ben Nasr and N. Jarboui, New results about normal pairs of rings with zero-divisors, Ric. Mat. 63 (2014) 149-155.

\bibitem{cahen} P. J. Cahen, D. E. Dobbs and T. G. Lucas, Valuative domains, J. Algebra Appl. 9 (2010) 43-72.

\bibitem{fontana} D.E. Dobbs and M, Fontana, Universally incomparable ring homomorphisms, Bull. Austrl. Math. Soc. 29(3) (1984) 289-302.

%\bibitem{dobbs2} D.E. Dobbs, G. Picavet and M. Picavet-L'Hermitte, Characterizing the ring extensions that satisfy FIP or FCP, J. Algebra 371 (2012) 391-429.

\bibitem{ferrand} D. Ferrand and J.-P. Olivier, Homomorphismes minimaux d'anneaux, J. Algebra 16 (1970) 461-471.

\bibitem{gilmer} R. Gilmer and W. Heinzer, Intersections of quotient rings of an integral domain,
J. Math. Kyoto Univ. 7 (1967) 133–149.

%\bibitem{gilmer1} R. Gilmer, Some finiteness conditions on the set of overrings of an integral domain, Proc. Amer. Math. Soc. 131(8) (2003) 2337-2346.

\bibitem{heds} J. R. Hedstrom and E. G. Houston, Pseudo-valuation domains, Pacific J. Math. 75(1) (1978) 137–147.

\bibitem{jarboui} N. Jarboui and S. Trabelsi, Some results about proper overrings of pseudo-valuation domains, J. Algebra Appl. 15(5) (2016) 1650099.

\bibitem{modica} M. L. Modica, Maximal subrings, Ph.D. Dissertation, University of Chicago (1975).

%\bibitem{ben2} M. Ben Nasr and N. Jarboui, On maximal non-valuation subrings, Houston J. Math. 37(1) (2011) 47-59.

\bibitem{papick} I. J. Papick, Topologically defined classes of going-down domains, Trans. Amer. Math. Soc. 219 (1976) 1-37.



\end{thebibliography}
\end{document}